\newtheorem{theorem}{Theorem}
\newtheorem{corollary}[theorem]{Corollary}
\newtheorem{definition}[theorem]{Definition}
\newtheorem{lemma}[theorem]{Lemma}
\newtheorem{notation}[theorem]{Notation}
\newtheorem{proposition}[theorem]{Proposition}
\newtheorem{definitions}[theorem]{Definitions}
\newtheorem{Remark}[theorem]{Remark}
\newenvironment{proof}[1][Proof]
{\noindent\textbf{#1.} }{\ \rule{0.5em}{0.5em}}
\begin{document}

\title{Irrational eigenvalues of D-Dimensional Cellular automata}
\author{ $^{1}$ Nassima Ait Sadi and $^{2}$ Rezki Chemlal  \\
$^{1}$$^{,}$$^{2}$ Laboratory of applied mathematics, University of Bejaia,Algeria.\\
$^{2}$ National Higher School in Mathematics, Algiers.
Algeria.\\
\textbf{Emails:} $^{1}$nassima.aitsadi@univ-bejaia.dz, $^{2}$%
rezki.chemlal@nhsm.edu.dz}

\date{}
\maketitle

\begin{abstract}

Cellular automata are dynamical systems defined on lattices and commuting with the Bernoulli shift. In this work, we focus on the spectral properties of D-dimensional cellular automata. We give a characterization of their spectrum from both topological and ergodic point of view. The main results of the paper show the impossibility for a cellular automaton with a fully blocking pattern to have a measurable irrational eigenvalues. Further more, a cellular automaton with a set of equicontinuity points of positive measure cannot have a measurable irrational eigenvalue. 

\begin{description}
\item[Keywords:] Cellular automata, eigenvalues, ergodic theory.
\item[AMS Subject Classification:] 37B15
 
\end{description}
\end{abstract}

\section{Introduction}

Cellular automata, originating from Von Neumann in the late 1940s and initially inspired by biological models, have evolved into versatile tools applied in various modeling contexts. They serve as models for complex systems in a wide range of areas .

A Cellular automaton is a discrete model of computation that evolve in space
and time. It consists of a regular grid of cells that can only adopt only
one given state at each time unit. For each cell, a set of cells called its
neighborhood is defined relative to the specified cell and it state at each
time unit is determined by applying the associated local rule of the
cellular automaton to the current state of the cell and the states of the
cells in which is connected. Typically, the rule for updating the state of
cells is the same for each cell and does not change over time. A
configuration is a snapshot of the state of all automata in the lattice. The
lattice is usually $\mathbb{Z}^{d}$ where $d$ represents the dimension of the cellular automaton.

One way to study the dynamical properties of a cellular automaton is to
endow the space of configurations with a topology and consider the cellular
automaton as a discrete dynamical system and by equipping the space of
configurations with the uniform Bernoulli measure \cite{Pivato, Walters}, cellular automata can then be studied
from the ergodic theory point of view. 
The uniform Bernoulli measure plays a central
role in the ergodic theory of cellular automata. This measure is invariant if and only if the cellular automaton is surjective \cite{Hedlund}.

In this paper, we study the eigenvalues of D-dimensional cellular automata which formalize some kind of structure representing the nonchaotic part of the dynamics. They correspond to complex rotations that are factors of the system. 
We start by showing that any $D-$dimensional cellular automaton with equicontinuity points admits at least one periodic factor. After that we
characterize the spectrum of $D-$dimensional cellular automata from the topological point of view first and then we show that if the
measure of the set of points of surjective cellular automaton with an eventually periodic trace (we mean by a trace of a given point $x$ studying
its evolution in the central window of radius $(2r)^{d}$ under the iteration of the cellular automaton) is equal to one then the cellular automaton cannot have measurable irrational eigenvalues. 
Furthermore, We show that if the measure of the set of equicontinuity points of $F$ is of positive measure, then either the cellular automaton $F$  have a measurable rational eigenvalue or the associated eigenfunction is null on the set of equicontinuity points. We finish by giving two applications of this result, one in the case when the cellular automaton is equicontinuous, an other one when the cellular automaton admits at least one fully blocking pattern.

\section{Basic definitions}
In this section, we present some basic notions in symbolic dynamics used along this paper.

\begin{notation}
    \begin{itemize}
        \item  A is a finite alphabet.
        \item The interval $[a,b]$ represent the set of integers from $a$ to $b$.
       
        \item The size of a pattern $w\in A^{i_{1}\times i_{2}\times ...\times i_{d}}$ is 
        $|w|=\prod_{j=1}^{d} |w|_{j}=\prod_{j=1}^{d} i_{j}.$ 
        \item  Denote by $A^{\mathbb{Z}^{d}}$ the set of all configurations in $\mathbb{Z}^{d}$ constructed over the alphabet $A$.
        \item For any pattern $w$, we define the cylinder $[w]$ at position $\overrightarrow{i}=(i_{1},i_{2},...,i_{d})\in \mathbb{Z}^{d}$ by:
\begin{equation*}
\lbrack w]_{\overrightarrow{i}}=\{x\in A^{\mathbb{Z}^{d}}:x|_{\prod\limits_{s=1}^{d}[i_{s},i_{s}+|w|_{s}]}=w\}
\end{equation*}
    \item The cylinder at the origin is denoted by $[w]_{\overrightarrow{0}}$.
    \end{itemize}
\end{notation}

\begin{definition} We define a metric over $A^{\mathbb{Z}^{d}}$ by setting $d(x,y)=0$ if $x=y$ and for all $x\neq y\in A^{\mathbb{Z}^{d}}$ :
\begin{equation*}
d(x,y)=2^{-\min \{||\overrightarrow{i}||:x_{\overrightarrow{i
}}\neq y_{\overrightarrow{i}}\}}\text{where }||\overrightarrow{i}||=\underset{s=\overline{1,D}}{\max }\{|i_{s}|\}.
\end{equation*}
The space $A^{\mathbb{Z}^{d}}$ endowed with $d$ is compact, perfect, and totally disconnected in the product topology \cite{Kurka}. 
\end{definition}
\begin{itemize}
\item Any vector $\overrightarrow{i}\in \mathbb{Z}^{d}$ determines a continuous shift map
$\sigma _{\overrightarrow{i}}:A^{\mathbb{Z}^{d}}\rightarrow A^{\mathbb{Z}^{d}}$ defined by\newline $\sigma _{\overrightarrow{i}}(x)_{\overrightarrow{j}}=x_{
\overrightarrow{j+i}},\forall \overrightarrow{j}\in \mathbb{Z}^{d}.$
\end{itemize}

\subsection{Cellular Automata}

\begin{definition}
    Let $\Bbbk $ be a subset in $\mathbb{Z}^{d}$ and $f:A^{\Bbbk }\rightarrow A$ be a function called \emph{the local rule}. \newline
A cellular automaton determined by $f$ is the function $F:A^{\mathbb{Z}^{d}}\rightarrow A^{\mathbb{Z}^{d}}$ defined by :
\begin{center}
    
$F(x)_{\overrightarrow{m}}=f(x_{\overrightarrow{m}+\Bbbk
}) $ for all $x\in A^{\mathbb{Z}^{d}}$ and all $\overrightarrow{m}\in \mathbb{Z}^{d}.$
\end{center}

\end{definition}

 Curtis-Hedlund and Lyndon \cite{Hedlund} showed that cellular automata are exactly the continuous transformations of $A^{\mathbb{Z}^{d}}$ that commute with all shifts. We refer to $\Bbbk $ as a radius of $F.$
\begin{definitions}
    \begin{itemize}
        \item A point $x$ is a periodic point of period $p$ if $F^{p}(x)=x$ and $\forall
i<p,$ $F^{i}(x)\neq x.$ 
\item If there exist an integer $m$ such that $F^{m}(x)$
is periodic then $x$ is called an eventually periodic point.
\item The periodic points of the shift are called spatially periodic points and the periodic points for $F$ that are not shift periodic are called strictly temporally periodic points. Any spatially periodic point is eventually periodic for the cellular automaton.

\item A pattern $w$ of size $(k_{1}\times k_{2}\times...k_{d}$) is called $(r_{1},r_{2},...,r_{d})-$blocking with offset $(p_{1},p_{2},...,p_{d})$ if
there exist a non-negative integers $\{p_{i}\}_{i \in [1,d]}$ satisfying 
$p_{i}\leq k_{i}-r_{i}$ for all $i \in [1,d]$ such that for any $x,y\in
\lbrack w]_{\overrightarrow{0}}$ and $n\geq 0$, we have: $%
F^{n}(x)|_{\prod\limits_{j=1}^{d}[p_{i},p_{i}+r_{i})}=F^{n}(y)|_{\prod%
\limits_{j=1}^{d}[p_{i},p_{i}+r_{i})}.$\newline
When the offset $p_{i}=0,\forall i \in [1,d]$ the pattern $w$ is said a fully blocking pattern.
\begin{figure}[!hbtp]
\centering
\begin{minipage}{0.49\textwidth}
    \centering
    \includegraphics[width=6.5cm,height=4cm]{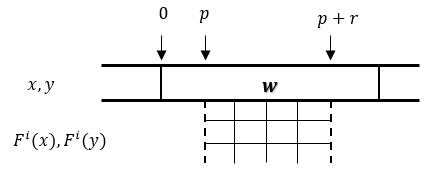}
     \caption{Blocking word, for d=1}
     \label{}  
     \end{minipage}
\hfill  
\begin{minipage}{0.49\textwidth}
    \centering
    \includegraphics[width=6cm,height=4cm]{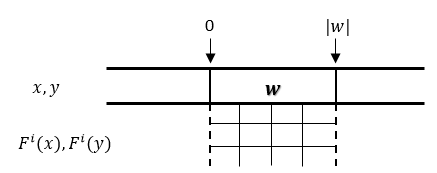}
    \caption{Fully blocking word, for d=1}
    \label{u}
    \end{minipage}
\hfill
\end{figure}

\item A point $x$ is a point of equicontinuity for the cellular automaton $(A^{\mathbb{Z}^{d}},F)$ if 
\begin{equation*}
\forall \varepsilon >0,\exists \delta >0:\forall y:d(x,y)<\delta \Rightarrow
\forall n\in \mathbb{N},d(F^{n}(x),F^{n}(y))<\varepsilon
\end{equation*}

\item The cellular automaton $F$ is equicontinuous if all points are points of equicontinuity and is almost equicontinuous if the set of equicontinuity contain a countable intersection of dense open sets.

\item  The cellular automaton $F$ is said to be sensitive to the initial conditions if
\begin{equation*}
\exists \varepsilon >0,\forall x\in A^{\mathbb{Z}^{d}},\forall \delta >0,\exists y:d(x,y)<\delta ,\exists n\in \mathbb{N}:d(F^{n}(x),F^{n}(y))\geq \varepsilon.
\end{equation*}

    \end{itemize}
\end{definitions}

Kurka \cite{Kurka} introduced a classification of one dimensional cellular automata according to sensitivity and equicontinuity of the cellular automaton.

\begin{theorem}
Let $F$ be a one dimensional cellular automaton with radius $r.$ The following properties are equivalent:

\begin{enumerate}
\item $F$ is not sensitive.
\item $F$ admit an $r-blocking$ pattern.
\item $F$ is almost equicontinuous.
\end{enumerate}
\end{theorem}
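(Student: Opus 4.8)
The plan is to establish the cyclic chain of implications $(2)\Rightarrow(3)\Rightarrow(1)\Rightarrow(2)$, where the final arrow is proved in its contrapositive form $\neg(2)\Rightarrow F\text{ sensitive}$. Throughout I work in dimension $d=1$, so the metric is governed by the central window $[-N,N]$, and I use the Curtis--Hedlund--Lyndon shift-commutation of $F$ freely.

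For $(2)\Rightarrow(3)$, suppose $F$ admits an $r$-blocking word $w$ of length $k$ with offset $p$, so $p\leq k-r$ and the window $[p,p+r)$ is frozen whenever $w$ sits at the origin. I would consider the set $E$ of configurations containing $w$ infinitely often both to the left and to the right of the origin. Given $x\in E$ and a target accuracy $\varepsilon=2^{-N}$, I pick an occurrence of $w$ at some position $a\leq -N-k$ and another at some position $b\geq N$; since $F$ commutes with the shift, each flanking copy freezes its window $[a+p,a+p+r)$, respectively $[b+p,b+p+r)$, for every iterate, and these two frozen windows seal off the enclosed region, which contains $[-N,N]$. Hence any $y$ agreeing with $x$ on $[a,b+k)$ satisfies $d(F^{n}(x),F^{n}(y))<\varepsilon$ for all $n$, which furnishes the equicontinuity $\delta$ at $x$. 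Finally, $E$ is the countable intersection of the dense open sets ``$w$ occurs at some position $\geq M$'' and ``$w$ occurs at some position $\leq -M$'', so by Baire's theorem $E$ is residual and $F$ is almost equicontinuous.

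For $(3)\Rightarrow(1)$, almost equicontinuity yields a dense, hence nonempty, set of equicontinuity points. Fixing one such point $x_{0}$, for each $\varepsilon$ the equicontinuity $\delta$ at $x_{0}$ exhibits a point and radius at which the escape condition defining sensitivity fails; thus $F$ is not sensitive. This is the easy step. For $\neg(2)\Rightarrow F\text{ sensitive}$, assume no word is $r$-blocking, and claim sensitivity holds with the fixed constant $\varepsilon=2^{-r}$. Given any $x$ and any $\delta=2^{-m}$, set $u=x|_{[-m,m]}$. Since $u$ fails to be $r$-blocking, there are configurations $y_{1},y_{2}$ agreeing with $x$ on $[-m,m]$ and a time $n$ with $F^{n}(y_{1})$ and $F^{n}(y_{2})$ differing on the central window of radius $r$. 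By the triangle inequality at least one $y_{i}$ satisfies $d(F^{n}(x),F^{n}(y_{i}))\geq\varepsilon$ while $d(x,y_{i})\leq\delta$, and this $y_{i}$ is the required perturbation.

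The main obstacle is the geometric bookkeeping in $(2)\Rightarrow(3)$: verifying precisely that two flanking copies of the blocking word freeze the entire enclosed window for \emph{all} iterates. This is where the offset constraint $p\leq k-r$ and shift-commutation must be combined carefully to reduce the two-sided statement to the single-block blocking property, and where one must confirm that the sets cut out by ``$w$ occurs beyond position $M$'' are genuinely dense and open so that Baire's theorem applies. The remaining two implications are essentially formal once the definitions are unwound.
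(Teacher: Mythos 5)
The paper never proves this theorem: it is stated as background and attributed to K\r{u}rka \cite{Kurka}, and the text immediately moves on to Gamber's higher-dimensional refinements, so there is no in-paper argument to compare yours against. Your proof is, in substance, the standard argument from K\r{u}rka's work, and it is correct: the chain $(2)\Rightarrow(3)\Rightarrow(1)$ together with the contrapositive $\neg(2)\Rightarrow\text{sensitive}$ is exactly how the equivalence is usually established, and your residual set $E$ of configurations containing $w$ infinitely often on both sides matches the paper's definition of almost equicontinuity (the equicontinuity points contain a countable intersection of dense open sets). Two details deserve tightening. First, the ``sealing'' step in $(2)\Rightarrow(3)$, which you rightly single out as the crux, should be written as an induction on $n$: if $x,y$ agree on $[a,b+k)$, then $F^{n}(x)$ and $F^{n}(y)$ agree on $[a+p,\,b+p+r)$ for all $n$, since any position strictly between the two frozen windows has its radius-$r$ neighbourhood inside $[a+p,\,b+p+r)$, while positions inside the frozen windows are equal by the blocking property plus shift-commutation; the offset constraint $p\le k-r$ is precisely what makes the base case $[a+p,\,b+p+r)\subseteq[a,b+k)$ hold. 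Second, in $\neg(2)\Rightarrow$ sensitivity you should normalize $m\ge r$ (shrinking $\delta$ costs nothing), so that the central offset is admissible for the word $u=x|_{[-m,m]}$; then ``$u$ is not $r$-blocking'' means the freezing fails at \emph{every} admissible offset, in particular the central one, and your triangle-inequality argument with $\varepsilon=2^{-r}$ goes through, since a disagreement at a position of absolute value at most $r-1$ forces $d(F^{n}(y_{1}),F^{n}(y_{2}))\ge 2^{-(r-1)}=2\varepsilon$.
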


This equivalence do not hold in higher dimension, Gamber \cite{Emilie}
proved that the implication $3\Rightarrow 1$ and $1\Rightarrow 2$ still
holds in all dimensions and the existence of a fully blocking pattern implies the almost equicontinuity of the cellular automaton.

\begin{theorem}[\protect\cite{Emilie}]
Let $F:A^{\mathbb{Z}^{d}}\rightarrow A^{\mathbb{Z}^{d}}$ with radius $r$. If there exists a fully blocking pattern of size $k^{d}$ for $F$ where $k\geq r$ then $F$ is almost equicontinuous.
\end{theorem}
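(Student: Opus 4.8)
The plan is to show that the set of equicontinuity points of $F$ contains a dense $G_{\delta}$ set, which is exactly the assertion that $F$ is almost equicontinuous. Writing $w$ for the fully blocking pattern of size $k^{d}$ and $B=[0,r)^{d}$ for its (corner) blocking window, I would first isolate the mechanism that turns an occurrence of $w$ into a ``wall''. The defining property of a fully blocking pattern says precisely that, for every configuration $x$ containing $w$ at a position $\overrightarrow{a}$, the values of $F^{n}(x)$ on the translated window $\overrightarrow{a}+B$ are the same for all such $x$ and all $n\geq 0$; in other words the space-time evolution of the window is pinned by the content of $w$ alone, independently of everything outside the pattern.

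The heart of the argument is a barrier lemma. For a scale $m$, suppose one can place finitely many translates of $w$ around the box $C_{m}=[-m,m]^{d}$ so that the union $W$ of their windows forms a closed shell of thickness at least $r$ (in the sup-norm) separating an interior region $I\supseteq C_{m}$ from the exterior. I would prove, by induction on $n$, that $F^{n}(x)|_{I}$ depends only on $x|_{I\cup W}$: the values on $W$ are pinned by the blocking property, and because $W$ has thickness $\geq r$, the radius-$r$ neighbourhood of any interior cell never reaches the exterior, so the one-step local rule applied inside $I$ only ever consults interior cells (determined by induction) and shell cells (pinned). Consequently, if $y$ agrees with $x$ on the finite set $I\cup W$, then $F^{n}(x)$ and $F^{n}(y)$ agree on $C_{m}$ for every $n$; choosing $\delta$ small enough to force agreement on $I\cup W$ then yields equicontinuity at $x$ at scale $\varepsilon=2^{-m}$.

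Finally I would set $E_{m}$ to be the set of configurations admitting such a shell enclosing $C_{m}$, and let $E=\bigcap_{m}E_{m}$. Each $E_{m}$ is open, being a union of cylinder sets, and dense, because any finite pattern can be completed to contain a shell far enough out (at radius $>m$) without disturbing its central coordinates. Hence $E$ is a dense $G_{\delta}$ contained in the set of equicontinuity points, which proves almost equicontinuity.

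The main obstacle is the geometric construction underlying the barrier lemma in dimension $d\geq 2$: assembling the size-$r$ windows of copies of a single pattern into a connected, closed shell of thickness $\geq r$ around $C_{m}$. This is exactly where the hypotheses are used: ``fully blocking'' (offset zero, so the window sits at the corner of $w$ and the windows of neighbouring copies can be abutted rather than being buried in the interior of each pattern) and $k\geq r$ (so a single window is at least as wide as the interaction radius). Reconciling overlapping copies of $w$ when $k>r$, and verifying that the resulting shell genuinely disconnects interior from exterior in every coordinate direction, is the delicate part, whereas in dimension one the construction degenerates to placing a single copy of $w$ on each side of an interval.
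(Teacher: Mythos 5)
Your overall strategy (a barrier lemma plus a dense $G_{\delta}$ of configurations enclosed by pinned shells at every scale) is the standard route to this theorem; note that the paper itself does not prove the statement but imports it from \cite{Emilie}, whose argument has exactly this shape. Your induction in the barrier lemma and your residuality argument are both sound as far as they go.

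The genuine gap is the step you yourself flag as ``delicate'' and then defer: under the reading you adopt (pinned window $B=[0,r)^{d}$ sitting at the corner of a pattern of size $k^{d}$), the pinned shell cannot be assembled at all once $k>r$. If two copies of $w$ are to have abutting or overlapping windows, the copies must be placed at sup-distance at most $r<k$ from each other, so their $k^{d}$ supports overlap; but nothing guarantees that $w$ is consistent with its own translates (for a pattern whose cells carry pairwise distinct symbols, no nontrivial overlap is possible), so the corresponding cylinder may be empty. If instead the copies are placed with disjoint supports, consecutive windows are separated by gaps of unpinned cells of width at least $k-r>0$, and through such a gap there runs a chain of unpinned cells joining interior to exterior; two adjacent cells of that chain would have to be labelled interior and exterior, violating the separation hypothesis $\left(I+[-r,r]^{d}\right)\cap E=\emptyset$ of your barrier lemma. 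Hence for $k>r$ your sets $E_{m}$ may be empty and the density claim has no content. The resolution is that ``fully blocking'' in the cited theorem pins the \emph{entire} footprint of the pattern: for all $x,y\in[w]_{\overrightarrow{0}}$ and all $n\geq 0$, $F^{n}(x)$ and $F^{n}(y)$ agree on all of $[0,k)^{d}$, not merely on an $r^{d}$ corner (the present paper's rephrasing, which keeps the window of size $r^{d}$ and only sets the offset to zero, is weaker than the definition in \cite{Emilie}). With the full-footprint definition, non-overlapping abutting copies of $w$ tile a closed shell of thickness $k\geq r$ with no unpinned gaps, and the rest of your argument goes through verbatim.
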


\subsection{Factors}

Let $(A^{\mathbb{Z}^{d}},F)$ and $(B^{\mathbb{Z}^{d}},G)$ be two cellular automata and $\pi $ be a continuous map $\pi :A^{\mathbb{Z}^{d}}\rightarrow B^{\mathbb{Z}^{d}}$ such that $\pi \circ F=G\circ \pi .$
\begin{equation*}
\begin{tabular}{lll}
$A^{\mathbb{Z}^{d}}$ & $\overset{F}{\rightarrow }$ & $A^{\mathbb{Z}^{d}}$ \\ 
$\pi \downarrow $ &  & $\downarrow \pi $ \\ 
$B^{\mathbb{Z}^{d}}$ & $\underset{G}{\rightarrow }$ & $B^{\mathbb{Z}^{d}}$
\end{tabular}%
\end{equation*}

\begin{enumerate}
\item If $\pi $ is bijective then we say that $\pi $ is a conjugacy and the
cellular automata $(A^{\mathbb{Z}^{d}},F)$,$(B^{\mathbb{Z}^{d}},G)$\ are conjugate.

\item If\ \ $\pi $ is surjective then we say that $\pi $ is factor map and
the cellular automaton $(B^{\mathbb{Z}^{d}},G)$ is a factor of $(A^{\mathbb{Z}^{d}},F)$.
\end{enumerate}

\subsection{Measurable Dynamics}

We endow the symbolic space $A^{\mathbb{Z}^{d}}$ with the sigma-algebra \ss\ generated by all cylinder sets and $\mu $
the uniform Bernoulli measure which gives to any letter from the alphabet $A$ the same
probability. \newline
As the uniform measure is only invariant when the cellular automaton is
surjective then the following $(A^{\mathbb{Z}^{d}},$\ss $,F,\mu )$ denote a surjective cellular automaton equipped with the uniform Bernoulli measure.

An element of the unit circle is an eigenvalue of a cellular automaton $F$
associated to the eigenfunction $g$ if we have $g\circ F=\lambda g$. Whether
we place ourselves from the topological or ergodic perspective the function $
g$ is required either to be continuous or measurable. An eigenvalue $\lambda =e^{2\pi
i\alpha }$ is said irrational if $\alpha $ is an irrational number \cite%
{Walters}.

A cellular automaton $(A^{\mathbb{Z}^{d}},$\ss $,F,\mu )$ is ergodic if the measure of any invariant subset of $\mathbb{Z}^{d}$ is either $0$ or $1$. Otherwise, $F$ is ergodic if  any eigenfunction is of constant module \cite{Walters}.
\subsubsection{Gilman's Classification }

Gilman proposed a classification for cellular automata on the basis of a measurable version of the existence  of equicontinuity points.  The cellular automata considred  are equipped with a Bernoulli measure and are not necessarily surjective. 
\begin{definition}
    Let $ (A^{\mathbb{Z}}, F, \mu )$  be a cellular automaton where $\mu$ is the Bernoulli measure. \\For a point $x \in A^{\mathbb{Z}}$, we define the following set:
    \begin{center}
        $B_{[a,b]}(x)=\{y \in A^{\mathbb{Z}}: \forall n\in \mathbb{N}, F^{n}_{[a,b]}(y)=F^{n}_{[a,b]}(x)\}$ 
    \end{center}
\end{definition}
\begin{definition}
 Let $ (A^{\mathbb{Z}}, F, \mu )$  be a cellular automaton equipped with a Bernoulli measure $\mu$. 
 \begin{itemize}
     \item A point $x \in A^{\mathbb{Z}}$ is a $\mu$- equicontinuous point if for any $m>0$ we have:
 \begin{center}
     $\lim_{n\rightarrow +\infty} \frac{\mu([x_{[-n,n]}] \bigcap B_{[-m,m]}(x))}{[x_{[-n,n]}] }=1$
 \end{center}
 \item $F$ is almost expansive if there exist an integer $m>0$ such that for any point $x \in A^{\mathbb{Z}}$ , we have $\mu (B_{[-m,m]}(x))=0.$
 \end{itemize}
     
\end{definition}
\begin{proposition}
    Cellular automata $ (A^{\mathbb{Z}}, F, \mu )$ where $\mu$ is the Bernoulli measure are divided into the three following classes: 
    \begin{enumerate}
        \item $F \in \mathcal{A}$ if $F$ admits a topological  equicontinuity point.
        \item $F \in \mathcal{B}$ if $F$ admits a $\mu $ -equicontinuity points without any topological ones.
        \item $F \in \mathcal{C}$ if $F$ is almost expansive.
    \end{enumerate}

\end{proposition}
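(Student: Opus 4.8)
\noindent The plan is to read the Proposition as the assertion that $\mathcal{A},\mathcal{B},\mathcal{C}$ genuinely \emph{partition} the Bernoulli cellular automata on $A^{\mathbb{Z}}$, so that two things must be established: pairwise disjointness of the three classes, and exhaustiveness. Throughout I would exploit that traces on nested windows refine one another: since $d(F^{n}(x),F^{n}(y))<2^{-m}$ holds exactly when $F^{n}(x)$ and $F^{n}(y)$ agree on $[-m,m]$, one has $B_{[-(m+1),m+1]}(x)\subseteq B_{[-m,m]}(x)$, so the sets $E_{m}:=\{x:\mu(B_{[-m,m]}(x))>0\}$ form a decreasing sequence, and each $E_{m}$ is a union of at most countably many atoms (disjoint positive-measure trace classes). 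I would also use the martingale/Lebesgue density theorem for the filtration $(\mathcal{F}_{n})_{n}$ generated by the coordinates in $[-n,n]$: for any measurable $E$, $\mu$-almost every $x\in E$ satisfies $\mu([x_{[-n,n]}]\cap E)/\mu([x_{[-n,n]}])\to 1$.

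For disjointness the arguments are short. The classes $\mathcal{A}$ and $\mathcal{B}$ are disjoint by the very definition of $\mathcal{B}$. For $\mathcal{A}\cap\mathcal{C}=\emptyset$, let $x$ be a topological equicontinuity point and let $m$ witness almost expansivity; applying equicontinuity with $\varepsilon=2^{-m}$ yields $\delta=2^{-N}$ with $[x_{[-N,N]}]\subseteq B_{[-m,m]}(x)$, whence $\mu(B_{[-m,m]}(x))\ge\mu([x_{[-N,N]}])>0$, contradicting $\mu(B_{[-m,m]}(x))=0$. For $\mathcal{B}\cap\mathcal{C}=\emptyset$, if $x$ is a $\mu$-equicontinuity point and $m$ witnesses almost expansivity, then $\mu(B_{[-m,m]}(x))=0$ forces every numerator $\mu([x_{[-n,n]}]\cap B_{[-m,m]}(x))$ to vanish, so the defining limit is $0$ rather than $1$, a contradiction.

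For exhaustiveness, suppose $F\notin\mathcal{A}$ and $F\notin\mathcal{C}$; the latter means $F$ is not almost expansive, i.e. $E_{m}\neq\emptyset$, hence $\mu(E_{m})>0$, for every $m$. I would then identify the $\mu$-equicontinuity points with $\bigcap_{m}E_{m}$ modulo a null set. Indeed, applying the density theorem to each of the countably many atoms shows that $\mu$-almost every point of $E_{m}$ is a density point of its own level-$m$ trace class; conversely any $x$ with $\mu(B_{[-m,m]}(x))=0$ cannot have density $1$, so the level-$m$ density points coincide with $E_{m}$ up to $\mu$. Since the ratios for the larger set $B_{[-m,m]}(x)$ dominate those for $B_{[-(m+1),m+1]}(x)$, density $1$ at level $m+1$ is inherited at level $m$; hence a point of $\bigcap_{m}E_{m}$ that is a density point at every level is precisely a $\mu$-equicontinuity point, and such points fill $\bigcap_{m}E_{m}$ off a null set.

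The main obstacle is to guarantee that $\bigcap_{m}E_{m}$ is nonempty, equivalently that $\lim_{m}\mu(E_{m})>0$. This does not follow formally from $\mu(E_{m})>0$ for all $m$, since a decreasing sequence of positive-measure sets can have negligible intersection: a single positive-measure level-$m$ trace class may shatter into uncountably many measure-zero classes at level $m+1$, so one cannot naively nest a descending chain of atoms. Controlling this shattering is exactly the content of Gilman's theorem, and it is where the cellular-automaton structure — that the trace classes come from the finite local rule together with shift-commutation, rather than being arbitrary measurable sets — must be used to produce a genuine $\mu$-equicontinuity point. I would invoke Gilman's construction for this step, after which $F\notin\mathcal{A}$ places $F$ in $\mathcal{B}$, completing the trichotomy.
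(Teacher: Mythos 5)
First, a point of reference: the paper does not prove this proposition at all. It sits in the preliminaries under ``Gilman's Classification'' and is quoted as a known result from \cite{Gilman}; the authors give no argument. So there is no proof in the paper to measure your attempt against, and the assessment below is of your proposal on its own terms.

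On those terms, the routine parts of your proposal are correct, but there is a genuine gap exactly where you flag one. Disjointness is fine: $\mathcal{A}\cap\mathcal{B}=\emptyset$ by definition; $\mathcal{A}\cap\mathcal{C}=\emptyset$ because a topological equicontinuity point $x$ yields a cylinder $[x_{[-N,N]}]\subseteq B_{[-m,m]}(x)$ of positive Bernoulli measure; and $\mathcal{B}\cap\mathcal{C}=\emptyset$ because $\mu(B_{[-m,m]}(x))=0$ forces every ratio $\mu([x_{[-n,n]}]\cap B_{[-m,m]}(x))/\mu([x_{[-n,n]}])$ to equal $0$, not tend to $1$. The martingale/density reduction is also sound: each $E_{m}$ is the union of at most countably many positive-measure trace classes, almost every point of such a class is a density point of it, density at level $m+1$ dominates density at level $m$, and hence the $\mu$-equicontinuity points coincide with $\bigcap_{m}E_{m}$ up to a null set. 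But the entire mathematical content of the trichotomy is the step you defer: showing that when no $m$ witnesses almost expansivity (so every $E_{m}$ is nonempty, hence of positive measure), the intersection $\bigcap_{m}E_{m}$ is non-null --- that is, positive-measure classes do not shatter into null classes as the window widens. This is precisely where the cellular-automaton structure must enter (finite radius plus shift-commutation and the independence of the Bernoulli measure; e.g.\ the fact that, once the window is at least as wide as the radius, the trace on the window together with the configuration on one side of it determines the forward evolution on that side), and your proposal does not carry it out. Invoking ``Gilman's construction'' at this point is invoking the very theorem being stated, so as a standalone proof the proposal is circular/incomplete, though you are candid about it. In the end you and the authors are in the same position: both rest this statement on Gilman's original argument --- the difference is that the paper openly treats it as imported background, while your text presents everything but its hardest step.
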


\section{Results}

\subsection{Topological results}

In this section, we focus on the set of periodic factors of a D-dimensional cellular automata with equicontinuity points that are not equicontinuous. After that we show that a D-dimensional cellular automaton cannot have topological irrational eigenvalues.

\
\begin{definition}
    A fully blocking word $w$ is of  order $p$ if  the period of the sequence $[F^{i}([w]_{\overrightarrow{0}})]_{\overrightarrow{0}} , $ i $ \in \mathbb{N}$ \\is  $p$. i.e. 
    \begin{center}
        $\forall x \in $ $A^{\mathbb{Z}^{d}}$ , $\exists m,p \in \mathbb{N} : F^{m+np}([w])_{_{\overrightarrow{0}}}=F^{m}([w])_{_{\overrightarrow{0}}} ,\forall n\in \mathbb{N}$. 
    \end{center}
\end{definition}

\begin{proposition}
Let $(A^{\mathbb{Z}^{d}},F)$ be a cellular automaton with radius $r^{d}$. If $F$ admit a fully blocking word $w$ of period $p\neq 1$ such that for any integer $n$ , $F^{n}(w)$ is fully blocking too, then $F$ has at least a non trivial periodic factor.
\end{proposition}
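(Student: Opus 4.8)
The plan is to use the blocking word as a clock and to build an explicit factor onto a period-$p$ cyclic cellular automaton. First I would unpack the hypotheses to produce a cycle of fully blocking words. Since $w$ is fully blocking of period $p$ and every iterate $F^{n}(w)$ is again fully blocking, the central window content $[F^{i}([w]_{\overrightarrow{0}})]_{\overrightarrow{0}}$ runs through $p$ distinct values and then repeats; writing $w_{0}=w,w_{1},\dots,w_{p-1}$ for the corresponding fully blocking words we obtain a cycle $w_{0}\xrightarrow{F}w_{1}\xrightarrow{F}\cdots\xrightarrow{F}w_{p-1}\xrightarrow{F}w_{0}$, each transition being forced on the blocking window by the blocking property. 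This cycle is the object that carries the period-$p$ structure I want to export to a factor.

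Next I would set up the target and the coding map. Take $B=\{0,1,\dots,p-1\}\cup\{\ast\}$ and let $G:B^{\mathbb{Z}^{d}}\to B^{\mathbb{Z}^{d}}$ be the radius-zero cellular automaton sending each symbol $i\in\{0,\dots,p-1\}$ to $i+1 \bmod p$ and fixing $\ast$; then $G^{p}=\mathrm{id}$, so $(B^{\mathbb{Z}^{d}},G)$ is a genuinely periodic system, non-trivial because $p\neq1$. I would then define a local phase observable $\rho:A^{\mathbb{Z}^{d}}\to B$ that returns $i$ when a sufficiently large window around the origin certifies, through the blocking property, that the window trace is in phase $i$, and returns $\ast$ otherwise, and I set $\pi(x)_{\overrightarrow{j}}=\rho(\sigma_{\overrightarrow{j}}(x))$. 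Because $\rho$ depends on finitely many coordinates it is locally constant, so $\pi$ is continuous, and because $\pi$ is a sliding block code it commutes with every shift. The factor relation $\pi\circ F=G\circ\pi$ reduces, using $\sigma_{\overrightarrow{j}}\circ F=F\circ\sigma_{\overrightarrow{j}}$, to the single identity $\rho\circ F=\rho\oplus1$, which is precisely where the cycle $w_{i}\to w_{i+1}$ and the blocking property are used: being in phase $i$ forces phase $i+1$ after one step, while a site carrying $\ast$ must be sent to $\ast$. Restricting to $\pi(A^{\mathbb{Z}^{d}})$ then yields the asserted periodic factor, and any point of $[w]_{\overrightarrow{0}}$ maps to a point of exact $G$-period $p$, witnessing non-triviality.

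The hard part will be establishing the intertwining identity $\rho\circ F=\rho\oplus1$ cleanly for all configurations, and in particular reconciling the phase advance with the fact that $F$ contracts patterns: one application of $F$ determines the image only on a window strictly smaller than the word, so the phase must be read from the guaranteed window trace rather than from a raw occurrence of $w_{i}$, and the reading window has to be chosen large enough that the blocking property certifies the phase for every $x$, including those that contain no $w_{i}$ and should receive $\ast$. I would handle this by defining $\rho(x)=i$ through membership in the clopen set of configurations whose orbit is blocked into the phase-$i$ window trace, then checking that these sets are pairwise disjoint, that $F$ maps the phase-$i$ set into the phase-$(i+1)$ set, and that their complement (the $\ast$-set) is $F$-invariant; the hypothesis that every iterate $F^{n}(w)$ is fully blocking is exactly what keeps these sets clopen and the transitions exact. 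Finally I would invoke $p\neq1$ to rule out a collapse to a single point, so that the exhibited factor is genuinely non-trivial and periodic.
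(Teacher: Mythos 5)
Your construction fails at precisely the point you defer as ``the hard part,'' and the failure is not a technicality: your three requirements --- clopen phase sets (so that $\rho$ is locally constant), $F$ mapping phase $i$ into phase $i+1$, and an $F$-invariant $\ast$-set --- are jointly unsatisfiable, because nothing in the hypotheses prevents $F$ from \emph{creating} occurrences of the blocking words out of uncertified configurations. Concretely, take $d=1$, $A=\{0,1,2,3\}$, radius $1$, and local rule $f$ with $f(\cdot,2,\cdot)=3$, $f(\cdot,3,\cdot)=2$, $f(0,1,0)=2$, $f(1,1,1)=1$, and $f=0$ on all other neighbourhoods. Then $w=2$ is fully blocking of period $p=2$ and every iterate $F^{n}(w)\in\{2,3\}$ is fully blocking, so the hypotheses of the proposition hold; but the configuration $y$ equal to $1$ at the origin and $0$ elsewhere contains no occurrence of $2$ or $3$, while $F(y)$ has a $2$ at the origin. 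So any bounded-window certificate scheme gives $\rho(y)=\ast$, forcing $\rho(F(y))=\ast\oplus 1=\ast$, which contradicts the fact that $F(y)$ is certified in a phase: the identity $\pi\circ F=G\circ\pi$ fails at $y$. If you instead enlarge the phase sets to swallow such preimages, the regression is unbounded: the configuration $y^{(t)}$ equal to $1$ on $[-t,t]$ and $0$ elsewhere first shows a $2$ at the origin at time $t+1$, so it needs a certificate of radius roughly $t$; the phase it should receive alternates with $t$, while the limit configuration (all $1$'s) is an $F$-fixed point that is never certified and must receive $\ast$. Hence the set $\bigcup_{t\geq 0}F^{-t}(\text{certified sets})$ is open but not closed, $\rho$ is discontinuous at the all-ones point, and no continuous $\pi$ defined on all of $A^{\mathbb{Z}^{d}}$ can be produced along these lines.

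The paper avoids this obstruction by never asking for a factor of the whole system. It fixes an equicontinuity point $x$ containing occurrences of $w$, uses eventual periodicity of its trace to find $m,p$ with $F^{m}(x)_{[-nr,nr+r]^{d}}=F^{m+np}(x)_{[-nr,nr+r]^{d}}$, forms the finitely many cylinders $W_{k}=[F^{k}(x)_{[-nr,nr+r]^{d}}]$ for $m\leq k\leq m+p-1$, and uses the hypothesis that all iterates of $w$ are fully blocking to obtain $F(W_{k})\subseteq W_{k+1}$ (cyclically). The map sending $z\in W_{k}$ to $k-m$ is then a factor map from the \emph{restriction} of $F$ to the invariant set $W=\bigcup_{k=m}^{m+p-1}W_{k}$ onto $(\mathbb{Z}/p\mathbb{Z},\,x\mapsto x+1 \bmod p)$; uncertified configurations are simply outside the domain, so no symbol $\ast$ and no invariance problem ever arise. (In the example above this is exactly $W=[2]\cup[3]$ mapping onto $\mathbb{Z}/2\mathbb{Z}$.) Your argument becomes correct if you make the same concession: define the phase sets as clopen certified cylinders, verify $F(\text{phase }i)\subseteq\text{phase }(i+1)$, and present the result as a factor of $F$ restricted to their union rather than of $(A^{\mathbb{Z}^{d}},F)$ itself.
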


\begin{proof}
Let $n$ be an integer and $x$ be an equicontinuity point for $F$ admitting occurrences of the blocking pattern $w$ at the coordinates given by $\overrightarrow{i}
\in \mathbb{Z}^{d}$ such that:
\begin{equation*}
\left\Vert \overrightarrow{i}\right\Vert =\left\Vert \overrightarrow{%
(i_{1},...,i_{d})}\right\Vert =\max_{1\leq j\leq d}|i_{j}|=nr\text{ and }%
\forall j\in [1,d],\text{ }i_{j}\text{ is a divisor of } n.
\end{equation*}

The sequence of
patterns $(F^{i}(x)_{[-nr,nr+r]^{d}})_{i\in \mathbb{N}
}$ is eventually periodic. This is due to the density of the shift periodic configurations that have an eventually periodic trace and to the equicontinuity of $x$. Then, there exist integers $m,p$ such
that for all $n\in 
\mathbb{N}
:$
\begin{equation*}
F^{m}(x)_{[-nr,nr+r]^{d}}=F^{m+np}(x)_{[-nr,nr+r]^{d}}.
\end{equation*}
Consider the sets $W_{k}$ of cylinder sets defined by $W_{k}= [F^{k}(x)_{[-nr,nr+r]^{d}}]$ such that $m\leq k\leq m+p-1$ and denote by $%
W=\bigcup\limits_{k=m}^{m+p-1}W_{k}.$ By assumption, we have $\forall i\in \mathbb{N}, F^{i}(w)$ is a fully blocking pattern what means that $F^{i}(x)$ is an equicontinuity point for any integer $i$ and for $m\leq k\leq m+p-1$, we have: $F(W_{k}) \subseteq W_{k+1}$.

Let $(\mathbb{Z}/p\mathbb{Z},P=(x+1)mod p)$ be a periodic dynamical system and $\pi $ be a
function defined from $W$ to $%
\mathbb{Z}
/p%
\mathbb{Z}
$ as follow:%
\begin{equation*}
\pi (x)=k-m\text{ for }x\in W_{k}\text{ and }m\leq k\leq m+p-1
\end{equation*}
Then we have:
\begin{eqnarray*}
\pi (F(x)) &=&\left\{ 
\begin{array}{c}
k+1-m\text{ if }x\in W_{k}\text{ st. }m\leq k\leq m+p-2 \\ 
1\text{ if }x\in W_{m+p-1}\text{ \ \ \ \ \ \ \ \ \ \ \ \ \ \ \ \ \ \ \ \ \ \
\ \ \ \ \ \ \ \ \ \ \ \ \ \ \ \ \ \ \ \ }%
\end{array}%
\right. \\
P(\pi (x)) &=&(\pi (x)+1)modp=\left\{ 
\begin{array}{c}
k+1-m\text{ if }x\in W_{k}\text{ st. }m\leq k\leq m+p-2 \\ 
1\text{ if }x\in W_{m+p-1}\text{ \ \ \ \ \ \ \ \ \ \ \ \ \ \ \ \ \ \ \ \ \ \
\ \ \ \ \ \ \ \ \ \ \ \ \ \ \ \ \ \ \ \ }%
\end{array}
\right.
\end{eqnarray*}
By consequence, the restriction of $F$ to $W$ admit $(%
\mathbb{Z}
/p
\mathbb{Z}
,P=(x+1)modp)$ as a periodic factor of period $p.$
\end{proof}

\begin{proposition}
Let $(A^{
\mathbb{Z}^{d}},F)$ be a cellular automaton, then $F$ cannot have a topological
irrational eigenvalues.
\end{proposition}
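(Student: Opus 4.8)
The plan is to argue by contradiction: suppose $F$ has a topological eigenvalue $\lambda=e^{2\pi i\alpha}$ with $\alpha$ irrational, realised by a nonzero continuous eigenfunction $g$ with $g\circ F=\lambda g$, and show that $g$ must vanish identically. The engine of the argument is the set of spatially periodic configurations, exploited through two features already recorded in the paper: they are dense in $A^{\mathbb{Z}^{d}}$, and (as stated among the definitions) each of them is eventually periodic for $F$.

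First I would recall \emph{why} every spatially periodic point is eventually periodic under $F$. If $x$ is invariant under every shift in a finite-index sublattice $L\subseteq\mathbb{Z}^{d}$, then since $F$ commutes with all shifts, $\sigma_{\overrightarrow{v}}(F(x))=F(\sigma_{\overrightarrow{v}}(x))=F(x)$ for all $\overrightarrow{v}\in L$, so $F(x)$ is invariant under the same lattice. Hence $F$ maps the \emph{finite} set of $L$-periodic configurations into itself, and the forward orbit of $x$ must cycle. Concretely, there exist integers $m\geq 0$ and $p\geq 1$ with $F^{m+p}(x)=F^{m}(x)$.

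Then I would substitute this into the eigenvalue relation. Iterating $g\circ F=\lambda g$ gives $g(F^{n}(x))=\lambda^{n}g(x)$ for all $n$, so from $F^{m+p}(x)=F^{m}(x)$ one obtains $\lambda^{m+p}g(x)=\lambda^{m}g(x)$, that is $\lambda^{m}(\lambda^{p}-1)g(x)=0$. Since $|\lambda|=1$ we have $\lambda^{m}\neq 0$, and since $\alpha$ is irrational we have $\lambda^{p}\neq 1$ for every $p\geq 1$; therefore $g(x)=0$ for every spatially periodic point $x$. Note that the period $p$ is allowed to depend on $x$, so no uniformity over $x$ is needed here.

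Finally I would invoke density and continuity: the spatially periodic points are dense in $A^{\mathbb{Z}^{d}}$, the function $g$ vanishes on all of them, and $g$ is continuous, so $g\equiv 0$, contradicting that $g$ is an eigenfunction. Hence no irrational $\alpha$ can occur and every topological eigenvalue is a root of unity. The step that deserves the most care is establishing genuine eventual periodicity of $F$ on the spatially periodic points (the finiteness and $F$-invariance of each lattice class); the rest is a clean density-plus-continuity propagation of the pointwise vanishing, and the only real hypotheses used are the nonvanishing of $g$ and the density of spatially periodic configurations.
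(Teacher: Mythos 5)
Your proof is correct and follows essentially the same route as the paper's: both hinge on the eventual periodicity of spatially periodic points under $F$, the eigenvalue relation along such an orbit forcing $\lambda^{p}=1$ unless $g$ vanishes there, and density plus continuity of $g$; you merely arrange it as a contradiction (forcing $g\equiv 0$) where the paper directly selects a shift-periodic point with $g(x)\neq 0$ and concludes $\alpha$ is rational. Your added justification that $F$ preserves the finite set of $L$-periodic configurations is a detail the paper asserts without proof, but it does not change the substance of the argument.
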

\clearpage
\begin{proof}
Let $\alpha $ be a real number and $g$ be a continuous non zero function such that :
\begin{center}
    $g\circ F(x)=e^{2\pi i\alpha }g(x), \forall x \in A^{\mathbb{Z}^{d}}.$ 
\end{center}

The set of periodic points for the shift is dense in $A^{%
\mathbb{Z}
^{d}}$ then $g$ cannot be null on this set. So, there exist at least one shift periodic point $x$  such that $g(x)\neq 0$. 
Or, as the shift periodic points are eventually periodic under the iterations of $F$ then there exist integers $m,p\in \mathbb{N}$ such that: $F^{m}(x)=F^{m+ip}(x)$ for all $i \in \mathbb{N}$.
This leads to the following: 
\begin{equation*}
g(F^{m+p}(x))=e^{2\pi ip\alpha }.g(F^{m}(x))
\end{equation*}
What means that $e^{2\pi ip\alpha }=1$,by consequence $\alpha $ can only be rational.
\end{proof}

\subsection{Ergodic results}

In this part, we suppose that the cellular automaton is surjective and the space of the configurations is
endowed with the uniform Bernoulli measure $\mu .$

\begin{definition}
Let $(A^{\mathbb{Z}^{d}},F,\mu )$ be a surjective cellular automaton.\newline
Define the measurable sets $Q_{n}$ by:
\begin{eqnarray*}
Q_{n} &=&\{x\in A^{\mathbb{Z}^{d}}:\exists m,p\in \mathbb{N}:\forall i\in \mathbb{N}:F^{m+ip}(x)_{[-n,n]^{d}}=F^{m}(x)_{[-n,n]^{d}}\} \\
&=&\{x\in A^{\mathbb{Z}^{d}}:(F^{m}(x)_{[-n,n]^{d}})_{i\in \mathbb{N}}\text{ is eventually periodic}\}.
\end{eqnarray*}
\begin{eqnarray*}
Q_{n}(k) &=&\{x\in A^{\mathbb{Z}^{d}}:\exists m,p\in \mathbb{N}, 1\leq p \leq k,\forall i\in \mathbb{N}:F^{m+ip}(x)_{[-n,n]^{d}}=F^{m}(x)_{[-n,n]^{d}}\}.
\end{eqnarray*} 
The set  $Q_{n}(k)$ represents the set of elements from $Q_{n}$ with periods bounded by $k$. $Q_{n}=\bigcup_{k\in \mathbb{N}} Q_{n}(k)$.

\end{definition}

\begin{lemma}
Let $(A^{\mathbb{Z}^{d}},F,\mu )$ be a surjective cellular automaton.\newline
If $\mu (Q_{n})=1,\forall n\in 
\mathbb{N}
$ then $F$ cannot have measurable irrational eigenvalues.
\label{lemma}
\end{lemma}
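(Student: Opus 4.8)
The plan is to argue by contradiction. Suppose $\lambda=e^{2\pi i\alpha}$ with $\alpha$ irrational is a measurable eigenvalue, with eigenfunction $g\in L^{2}(\mu)$ normalized so that $\|g\|_{2}=1$ and $g\circ F=\lambda g$ $\mu$-almost everywhere. Since $F$ is surjective, $\mu$ is $F$-invariant, so $F$ is measure preserving and $\|u\circ F^{j}\|_{2}=\|u\|_{2}$ for every $u\in L^{2}(\mu)$ and every $j$. The idea is to use the eventual periodicity of the central traces guaranteed by $\mu(Q_{n})=1$ to force the powers $\lambda^{p}$ to be trivial \emph{on average}, which is incompatible with $\alpha$ irrational. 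First I would approximate $g$ by functions adapted to the window where periodicity is controlled: for each $n$ let $\mathcal{F}_{n}$ be the $\sigma$-algebra generated by the coordinates in $[-n,n]^{d}$ and set $g_{n}=E[g\mid\mathcal{F}_{n}]$, so that $g_{n}$ is a cylinder function on $[-n,n]^{d}$ and, by martingale convergence, $\delta_{n}:=\|g-g_{n}\|_{2}\to0$. The crucial feature is that $g_{n}$ reads exactly the same window $[-n,n]^{d}$ in which $Q_{n}$ controls the dynamics.

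Next I would organize $Q_{n}$ by its periods. Up to a null set, partition $Q_{n}$ into the sets $R_{n,p}$ of points whose central trace $t\mapsto F^{t}(x)_{[-n,n]^{d}}$ has minimal eventual period $p$, and write $p(x)$ for this period. Each $R_{n,p}$ is $F$-invariant, because the trace of $Fx$ is the shift of the trace of $x$ and hence has the same minimal eventual period. On a fixed $R_{n,p}$ set $\psi=g_{n}\circ F^{p}-g_{n}$. Beyond the preperiod $m(x)$ the trace is genuinely $p$-periodic and $g_{n}$ depends only on the window, so $\psi\circ F^{j}(x)=0$ for every $j\ge m(x)$, whence $\psi\circ F^{j}\to0$ almost everywhere on $R_{n,p}$. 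Since both $R_{n,p}$ and $\mu$ are $F$-invariant, $\mu(\{|\psi|>\varepsilon\}\cap R_{n,p})=\mu(F^{-j}\{|\psi|>\varepsilon\}\cap R_{n,p})$ for every $j$; letting $j\to\infty$ and applying bounded convergence to the indicator functions forces this common value to be $0$ for each $\varepsilon>0$. Hence $\psi=0$, that is $g_{n}\circ F^{p}=g_{n}$, and iterating gives $g_{n}\circ F^{lp}=g_{n}$ for all $l\ge1$, almost everywhere on $R_{n,p}$.

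Finally I would combine this with the eigenvalue relation and pass to a limit. Using $g\circ F^{lp}=\lambda^{lp}g$ together with $g_{n}\circ F^{lp}=g_{n}$ on $R_{n,p}$, the pointwise bound $|\lambda^{lp(x)}-1|\,|g(x)|\le|(g-g_{n})\circ F^{lp(x)}|+|g_{n}-g|$ holds; squaring, summing over $p$, and invoking the invariance of each $R_{n,p}$ (so that each $(g-g_{n})\circ F^{lp}$ has the same $L^{2}$ mass as $g-g_{n}$ on that piece) yields, for every $l\ge1$,
\begin{equation*}
\int_{Q_{n}}\bigl|\lambda^{lp(x)}-1\bigr|^{2}\,|g(x)|^{2}\,d\mu\le 4\delta_{n}^{2}.
\end{equation*}
Averaging over $l=1,\dots,L$ and letting $L\to\infty$, Weyl's equidistribution theorem applies pointwise: for each $x$ with $p(x)\ge1$ the sequence $\{lp(x)\alpha\}$ is equidistributed modulo $1$ because $\alpha$ is irrational, so $\frac1L\sum_{l=1}^{L}|\lambda^{lp(x)}-1|^{2}\to\int_{0}^{1}|e^{2\pi i\theta}-1|^{2}\,d\theta=2$. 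By dominated convergence (the integrand is bounded by $4|g|^{2}\in L^{1}$) the left-hand side tends to $2\int_{Q_{n}}|g|^{2}\,d\mu=2\|g\|_{2}^{2}=2$, using $\mu(Q_{n})=1$. Thus $2\le 4\delta_{n}^{2}$, and letting $n\to\infty$ gives the contradiction $2\le0$.

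I expect the main obstacle to be the two point-dependent parameters, the period $p(x)$ and the preperiod $m(x)$, which block any direct use of a single power of $F$; the remedy is the invariant partition into the sets $R_{n,p}$ together with the measure-preservation argument, which eliminates $m(x)$ exactly and turns the window periodicity into the clean identity $g_{n}\circ F^{p}=g_{n}$. A secondary subtlety, and the reason a one-step estimate is useless, is that $\lambda^{p}$ may be arbitrarily close to $1$ for irrational $\alpha$; this is precisely why the argument must average over all multiples $l$ and invoke equidistribution in order to extract the definite constant $2$.
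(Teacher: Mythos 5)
Your proof is correct, but it follows a genuinely different route from the paper's. The paper works with a merely measurable eigenfunction and gets regularity from Lusin's theorem (the sets $H_{n}(\eta)$ of points where $g$ is nearly determined by the central window), then truncates $Q_{n}$ to the bounded-period part $Q_{n}(k)$ so as to extract a \emph{common} period $p$ for all points, builds a positive-measure set $S=E_{d_{0}}\cap F^{-k}(H_{n}(\eta_{0})\cap Q_{n}(k))$ by an inclusion--exclusion estimate, and finishes by picking a single $N_{0}$ with $|1-e^{2\pi iN_{0}p\alpha}|>1$ from the density of the irrational rotation orbit. You instead get regularity from the $L^{2}$ martingale $g_{n}=E[g\mid\mathcal{F}_{n}]$, handle the periods by partitioning $Q_{n}$ into the $F$-invariant pieces $R_{n,p}$ of fixed minimal eventual period, and kill the preperiod exactly via the measure-preservation argument ($\psi\circ F^{j}\to 0$ pointwise plus invariance forces $\psi=0$ a.e.), after which Weyl equidistribution and dominated convergence replace the paper's single dense point by an averaged constant $2\le 4\delta_{n}^{2}$. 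Your route buys two real advantages: it never needs a common or bounded period (the averaging over $l$ absorbs the point-dependence of $p(x)$), and the invariance trick disposes of the preperiod cleanly, whereas the paper's passage from $Q_{n}(k)$ to the assertion $F^{k}(x)_{[-n,n]^{d}}=F^{k+Np}(x)_{[-n,n]^{d}}$ for all $x\in Q_{n}(k)$ tacitly requires the preperiod, not just the period, to be bounded by $k$. What the paper's route buys is elementarity: Lusin plus density of rotations, and no integrability assumption on $g$. That points to the one caveat in your write-up: you normalize $\|g\|_{2}=1$, but the paper's eigenfunctions are only assumed measurable, so you should add the standard reduction --- $|g|$ is $F$-invariant and $\{g\neq 0\}$ is an invariant set of positive measure, so replacing $g$ by $g/|g|$ on its support (and $0$ elsewhere) yields a bounded, hence $L^{2}$, eigenfunction for the same $\lambda$ --- after which your argument goes through verbatim.
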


\begin{proof}
Suppose that there exist $\alpha \in\mathbb{R}
$, a subset $H\subset A^{\mathbb{Z}^{d}}$ of measure $1$ and a measurable nonzero function $g$ such that: $
\forall x\in H,g(F(x))=e^{2\pi i\alpha }g(x).$\newline
For $d>0$ denote by $E_{d}$ the measurable set of elements $x$ of $A^{\mathbb{Z}^{d}}$ satisfying 
$|g(x)|>d.$ We define by $H_{n}(\eta )$ for any $\eta >0$ the measurable following set:
\begin{equation*}
H_{n}(\eta )=\{x\in H:\forall y\in
H,x_{_{[-n,n]^{d}}}=y_{_{[-n,n]^{d}}}
\Rightarrow |g(x)-g(y)|<\eta \}.
\end{equation*}

By Lusin's theorem, there exist a closed set $F \subseteq H$ such that:
\begin{center}
    
$\left\{ 
\begin{tabular}{c}
$\mu(H \setminus F) < \epsilon$, for any $\epsilon >0$
\\ Any sequence of elements $y_{n} \in F$ that converge to $x\in F$,  $g(y_{n})$ converges to $g(x)$  
\end{tabular}
\right.$
\end{center}

In particular, for any point $x\in H$, consider the  sequence of elements $y_{n}$ from $H$ that share the same coordinates with $x$ at the positions $[-n,n]^{d}$ , then
$|g(y_{n})-g(x)|< \eta $ , $\forall \eta >0$. What means that $\mu(H_{n}(\eta)) $ =$\mu (F)$ $> 1-\epsilon$ when $n$ tends to infinity.

We choose $d_{0}$ such that $\mu (E_{d_{0}})=a>0$ and take $\eta _{0}<d_{0}.$ \newline
For $0<\varepsilon <\frac{a}{2}$ there exist $m>0$ such that for any $n>m$
we have:
\begin{equation*}
 \mu
(H_{n}(\eta _{0}))>1-\frac{\varepsilon }{3}\text{ and }\mu (Q_{n})>1-\frac{%
\varepsilon }{3}
\end{equation*}
Set a value $n>m$ and let $k$ be an integer and let $Q_{n}(k)$ be the
measurable part of $Q_{n}$ where the period is bounded by $k.$ When $k$ tends to infinity $Q_{n}(k)$ tends to $Q_{n}$. In particular, for $\frac{%
\varepsilon }{3}>0$ there exists then $k_{0}$ such that for all $k\geq k_{0}$: $|\mu (Q_{n}(k))- \mu (Q_{n})|< \frac{\epsilon}{3} $. i.e,
\begin{equation*}
\mu (Q_{n}(k))\geq \mu (Q_{n})-\frac{\varepsilon }{3}\geq 1-\frac{%
2\varepsilon }{3}.
\end{equation*}%
For fixed value $k\geq k_{0}$ the sequence $F^{k}(Q_{n}(k))_{[-n,n]^{d}}$ is periodic and the period is bounded. So all the
points from $F^{k}(Q_{n}(k))_{[-n,n]^{d}}$ have a common period $p$.
\begin{equation*}
\forall x\in Q_{n}(k),\forall N\in \mathbb{N}:F^{k}(x)_{[-n,n]^{d}}=F^{k+Np}(x)_{[-n,n]^{d}}
\end{equation*}
Denote by $S=E_{d_{0}}\cap F^{-k}((H_{n}(\eta _{0})\cap Q_{n}(k)).$\newline
Now, we will show that $\mu (S)>0.$ We have:%
\begin{eqnarray*}
\mu (S) &\geq &\mu (E_{d_{0}})+\mu (F^{-k}(H_{n}(\eta _{0})\cap Q_{n}(k)))-1
\\
&\geq &\mu (E_{d_{0}})+\mu (H_{n}(\eta _{0}))+\mu (Q_{n}(k))-2 \\
&\geq &a+1-\frac{\varepsilon }{3}+1-\frac{2\varepsilon }{3}-2\geq
a-\varepsilon >\frac{a}{2}.
\end{eqnarray*}
Hence the set $S=E_{d_{0}}\cap F^{-k}(H_{n}(\eta _{0})\cap Q_{n}(k))$ is of
positive measure.\newline
By definition of the set $H_{n}(\eta _{0})$ we have:
\begin{equation}
\forall x\in S,\forall N\in \mathbb{N}
:|g(F^{k}(x))-g(F^{k+Np}(x)|<\eta _{0}  \label{me}
\end{equation}
Moreover,
\begin{equation*}
\forall x\in S,\forall N\in 
\mathbb{N}
:|g(F^{k}(x))-g(F^{k+Np}(x)|=|e^{2\pi ik\alpha }-e^{2\pi i(k+Np)\alpha
}||g(x)|=|1-e^{2\pi iNp\alpha }||g(x)|
\end{equation*}
If $\alpha $ is irrational then the sequence $\{e^{2\pi iNp\alpha },N\in 
\mathbb{N}
\}$ is dense in the unite circle. There exist then $N_{0}$ such that $%
|1-e^{2\pi iN_{0}p\alpha }|>1.$\newline
From the definition of the set $E_{d_{0}}$ we have $|g(x)|>d_{0}$ over the set $S$ and we supposed $\eta _{0}<d_{0}$ then we obtain
\begin{equation}
|g(F^{k}(x))-g(F^{k+N_{0}p}(x)|=|1-e^{2\pi iN_{0}p\alpha }||g(x)|>d_{0}>\eta
_{0}  \label{metoo}
\end{equation}
The inequalities (\ref{me}) and (\ref{metoo}) are contradictory, then $\alpha $ cannot be irrational.
\end{proof}

\begin{Remark}
    The the assumption  of the Lemma  \ref{lemma} is not equivalent to the notion of $\mu-$ equicontinuity points. Indeed, the trace of any $\mu-$ equicontinuity point is eventually periodic but opposite is not valid. Furthermore, existence of   $\mu-$ equicontinuity points is not sufficient to have $\mu(Q_{n})=1, \forall n \in \mathbb{N}$.
\end{Remark}

\begin{proposition}
Let $(A^{%
\mathbb{Z}
^{d}},F,\mu )$ be a surjective cellular automaton.\\ If $F$ is equicontinuous then its eigenvalues are all rational.
\end{proposition}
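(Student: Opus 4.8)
The plan is to reduce the proposition to Lemma~\ref{lemma}: it suffices to prove that equicontinuity of $F$ forces $\mu(Q_{n})=1$ for every $n$, after which that lemma rules out measurable irrational eigenvalues and leaves only rational ones. In fact I expect to establish the stronger statement $Q_{n}=A^{\mathbb{Z}^{d}}$, via the classical fact that an equicontinuous cellular automaton is eventually periodic as a map, i.e. $F^{m+p}=F^{m}$ for some $m\geq 0$ and $p\geq 1$.

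First I would turn pointwise equicontinuity into a uniform, local statement. Since $A^{\mathbb{Z}^{d}}$ is compact, the equicontinuity hypothesis is uniform, so applying the definition with $\varepsilon=1$ yields a radius $s$ such that any two configurations agreeing on $[-s,s]^{d}$ have images under every $F^{n}$ that agree at the origin. Equivalently, for each $n$ there is a local map $f_{n}\colon A^{[-s,s]^{d}}\rightarrow A$ with $F^{n}(x)_{\overrightarrow{0}}=f_{n}(x|_{[-s,s]^{d}})$, and since each $F^{n}$ commutes with all shifts, this single local map determines $F^{n}$ on the whole lattice.

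Next I would invoke finiteness: the number of maps $A^{[-s,s]^{d}}\rightarrow A$ is finite, so the sequence $(f_{n})_{n}$ must repeat, giving $f_{m}=f_{m+p}$ and hence $F^{m}=F^{m+p}$ for some $m,p$. Then every orbit is eventually periodic, $F^{m+Np}(x)=F^{m}(x)$ for all $N\in\mathbb{N}$ and all $x$, so the trace $(F^{i}(x)|_{[-n,n]^{d}})_{i}$ is eventually periodic for every $x$ and every $n$; thus $Q_{n}=A^{\mathbb{Z}^{d}}$ and $\mu(Q_{n})=1$ for all $n$. Lemma~\ref{lemma} then immediately yields the absence of measurable irrational eigenvalues, so all eigenvalues are rational.

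I expect the only real subtlety to be the passage from pointwise to uniform equicontinuity, together with correct bookkeeping of the metric $d$: one must verify that controlling the origin coordinate of the iterates (the choice $\varepsilon=1$) corresponds to a finite determining window $[-s,s]^{d}$ that does \emph{not} depend on $n$, which is precisely where compactness of $A^{\mathbb{Z}^{d}}$ enters. The remaining steps — finiteness of the local rules, eventual periodicity of $F$, and membership in each $Q_{n}$ — are routine.
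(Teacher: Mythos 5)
Your proposal is correct and takes essentially the same route as the paper: both arguments reduce the proposition to Lemma~\ref{lemma} by showing that equicontinuity forces every trace to be eventually periodic, hence $\mu(Q_{n})=1$ for all $n$. The only difference is that the paper cites Kurka's theorem that a surjective equicontinuous cellular automaton satisfies $F^{p}=\mathrm{Id}$ for some $p$, whereas you derive the weaker but sufficient identity $F^{m+p}=F^{m}$ from first principles (uniform equicontinuity on a compact space plus finiteness of local rules), a step that incidentally needs no surjectivity --- surjectivity enters only through the hypothesis of Lemma~\ref{lemma}.
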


\begin{proof}
As the cellular automaton $F$ is surjective and equicontinuous then there
exist an integer $p$ such that $F^{p}=I_{d}$ \cite{Kurka}. This means that $%
\mu (Q_{n})=1,\forall n\in 
\mathbb{N}
$, then $F$ cannot have measurable irrational eigenvalues.
\end{proof}

\begin{proposition}
    Let $(A^{\mathbb{Z}^{D}},F,\mu)$ be a cellular automaton. Suppose that the measure of the set of equicontinuity points $\xi $ is strictly positive and $F$ admits a measurable eigenvalue $\alpha$ associated to an eigenfunction $g$. Then,  if  $g(x) \neq 0, \forall x \in \xi $ , $\alpha$  cannot be irrational.
    \label{propn}
\end{proposition}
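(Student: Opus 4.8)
The plan is to reduce the statement to the mechanism already used in Lemma~\ref{lemma}, the only genuinely new ingredient being a bridge showing that equicontinuity points feed into the sets $Q_n$. Write the eigenfunction relation as $g\circ F=e^{2\pi i\alpha}g$ holding on a set $H$ of full measure (recall that $F$ is surjective here, so $\mu$ is invariant). I would first prove the bridging fact: every equicontinuity point $x$ has an eventually periodic trace on every window $[-n,n]^{d}$, so that $\xi\subseteq Q_{n}$ for all $n$. Indeed, fix $x\in\xi$ and $n$; equicontinuity at $x$ applied to $\varepsilon=2^{-n}$ yields some $m$ with $y|_{[-m,m]^{d}}=x|_{[-m,m]^{d}}\Rightarrow F^{j}(y)|_{[-n,n]^{d}}=F^{j}(x)|_{[-n,n]^{d}}$ for all $j$. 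Choosing a shift-periodic $z$ agreeing with $x$ on $[-m,m]^{d}$ (these are dense), the trace of $x$ on $[-n,n]^{d}$ coincides with that of $z$; since spatially periodic points are eventually periodic under $F$, the trace of $z$, hence of $x$, is eventually periodic. Thus $\xi\subseteq\bigcap_{n}Q_{n}$.

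Next I would localize where $g$ is bounded away from $0$. Because $g\neq 0$ on $\xi$ and $\mu(\xi)>0$, the measurable sets $\{x\in\xi:|g(x)|>1/j\}$ increase to $\xi$, so I can fix $d_{0}>0$ with $a:=\mu(\xi\cap E_{d_{0}})>0$, where $E_{d_{0}}=\{|g|>d_{0}\}$, and pick $\eta_{0}<d_{0}$. Since $\xi\cap E_{d_{0}}\subseteq\xi\subseteq Q_{n}=\bigcup_{k}Q_{n}(k)$ with $Q_{n}(k)\uparrow Q_{n}$, and since (by the Lusin argument of Lemma~\ref{lemma}) $\mu(H_{n}(\eta_{0}))\to 1$ as $n\to\infty$, I can choose $\varepsilon<\tfrac{3a}{2}$, then $n$ large, then $k$ large, so that $\mu(\xi\cap E_{d_{0}}\cap Q_{n}(k))>a-\tfrac{\varepsilon}{3}$ and $\mu(H_{n}(\eta_{0}))>1-\tfrac{\varepsilon}{3}$. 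On $Q_{n}(k)$ the traces on $[-n,n]^{d}$ are eventually periodic with bounded period, so after enough steps they share a common period $p$ (take $p=\mathrm{lcm}(1,\dots,k)$ and, enlarging $k$, arrange the periodicity to already hold from iteration $k$).

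I would then set $S=\xi\cap E_{d_{0}}\cap Q_{n}(k)\cap F^{-k}(H_{n}(\eta_{0}))$, further intersected with the full-measure set on which the whole forward orbit stays in $H$. Invariance of $\mu$ gives $\mu(F^{-k}(H_{n}(\eta_{0})))=\mu(H_{n}(\eta_{0}))$, whence $\mu(S)\geq (a-\tfrac{\varepsilon}{3})+(1-\tfrac{\varepsilon}{3})-1=a-\tfrac{2\varepsilon}{3}>0$. For $x\in S$ the windows $[-n,n]^{d}$ of $F^{k}(x)$ and of $F^{k+Np}(x)$ coincide for every $N$, while $F^{k}(x)\in H_{n}(\eta_{0})$ and $F^{k+Np}(x)\in H$, so $|g(F^{k}(x))-g(F^{k+Np}(x))|<\eta_{0}$. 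On the other hand the iterated eigenfunction relation gives $|g(F^{k}(x))-g(F^{k+Np}(x))|=|1-e^{2\pi iNp\alpha}|\,|g(x)|$ with $|g(x)|>d_{0}$. If $\alpha$ were irrational, $\{e^{2\pi iNp\alpha}\}_{N}$ would be dense in the unit circle, so some $N_{0}$ makes $|1-e^{2\pi iN_{0}p\alpha}|>1$, forcing this quantity to exceed $d_{0}>\eta_{0}$ and contradicting the previous bound. Hence $\alpha$ must be rational.

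The main obstacle is twofold: establishing the bridge $\xi\subseteq Q_{n}$ in arbitrary dimension (handled by density of shift-periodic points together with equicontinuity), and the measure bookkeeping in the merely positive-measure regime. Unlike Lemma~\ref{lemma} we do not have $\mu(Q_{n})=1$, so the intersection estimate must be carried out \emph{inside} $\xi\cap E_{d_{0}}$, exploiting $\xi\subseteq Q_{n}$ to keep $\mu(\xi\cap E_{d_{0}}\cap Q_{n}(k))$ close to $a$; one must also ensure the common period $p$ is reached uniformly from iteration $k$, which is arranged by letting $k$, and hence the bound on the pre-period, grow.
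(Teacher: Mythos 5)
Your proposal is correct, and at the skeleton level it is the paper's own strategy: prove the bridge $\xi\subseteq Q_{n}$ (equicontinuity plus density of shift-periodic points, whose $F$-orbits are eventually periodic), then rerun the machinery of Lemma~\ref{lemma} (the sets $E_{d}$, $H_{n}(\eta)$, $Q_{n}(k)$, a positive-measure set $S$, and density of $\{e^{2\pi iNp\alpha}\}_{N}$ for irrational $\alpha$). But your measure bookkeeping is genuinely different, and better. The paper picks $d_{0}$ with $\mu(E_{d_{0}})=1-a$ and $a<\beta=\mu(\xi)$, i.e.\ it needs $\mu(\{|g|>d_{0}\})>1-\beta$; the stated hypothesis only gives $g\neq 0$ on $\xi$, hence only $\mu(\{g\neq 0\})\geq\beta$, so this choice is unjustified unless $\beta>1/2$ or one invokes an extra fact (e.g.\ shift-ergodicity of the uniform Bernoulli measure, which would force $\beta=1$; the paper never does this). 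You instead localize inside $\xi$, taking $a=\mu(\xi\cap E_{d_{0}})>0$, which exists under exactly the stated hypothesis, and you use $\xi\subseteq Q_{n}$ together with $Q_{n}(k)\uparrow Q_{n}$ to keep $\mu(\xi\cap E_{d_{0}}\cap Q_{n}(k))$ near $a$, so $\mu(S)>0$ follows with no hidden assumption. Two further points where your version is tighter than the paper's: you keep $Q_{n}(k)$ at time zero and pull back only $H_{n}(\eta_{0})$ by $F^{-k}$, so the periodicity anchor $F^{k}(x)|_{[-n,n]^{d}}=F^{k+Np}(x)|_{[-n,n]^{d}}$ and the membership $F^{k}(x)\in H_{n}(\eta_{0})$ refer to the same iterate (the paper pulls back $Q_{n}(k)$ as well, which misaligns the preperiod), and you explicitly intersect with the full-measure set on which the whole forward orbit stays in $H$, which is needed both to iterate the eigenvalue relation and to apply the definition of $H_{n}(\eta_{0})$ to $F^{k+Np}(x)$. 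The one step you share with the paper that still needs care is the common-period claim: $Q_{n}(k)$ as defined bounds the period but not the preperiod, so ``periodic already from iteration $k$ with period $\mathrm{lcm}(1,\dots,k)$'' requires refining $Q_{n}(k)$ to also bound the preperiod by $k$ (the refined sets still increase to $Q_{n}$); you flag and sketch exactly this fix, whereas the paper glosses over it.
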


\begin{proof}
Let $\alpha \in \mathbb{R}$ be an eigenvalue associated to an eigenfunction $g:A^{\mathbb{Z}^{d}} \rightarrow \mathbb{C} $  such that for any equicontinuity point $x \in A^{\mathbb{Z}^{d}} , g(x) \neq 0$ .

As the trace of any equicontinuity point $x$ is eventually periodic, this is due to the definition of equicontinuity points and the density of the shift periodic points in $A^{\mathbb{Z}^{d}}$. \newline then for any integer $n \in \mathbb{N}$ :  $\xi \subset Q_{n}$ . This leads to $\mu (Q_{n}) \geq \mu(\xi)=\beta>0, \forall n\in \mathbb{N} $. 

Consider the sets $H_{n}(\eta), E_{d}$ defined in the lemma \ref{lemma} and choose  $n \in \mathbb{N}$, $\eta_{0} <d_{0}$ such that :\\$\mu (E_{d_{0}})=1-a>0
$, $\beta -a>0$ and $\mu(H_{n}(\eta_{0}))>1-\frac{\epsilon}{2}$. Denote by $S=E_{d_{0}}\cap F^{-k}(H_{n}(\eta_{0} )\cap Q_{n}(k))$ where $\mu(Q_{n}(k))>\beta-\frac{\epsilon}{2}$.
 We have:
\begin{eqnarray*}
\mu (S) &\geq &\mu (E_{d_{0}})+\mu (F^{-k}(H_{n}(\eta_{0} )\cap  Q_{n}(k)))-1 \\
&\geq &\mu (E_{d_{0}})+\mu (H_{n}(\eta_{0} ))+\mu (Q_{n}(k))-2 \\
&\geq &1-a+1-\frac{\epsilon}{2}+\beta-\frac{\epsilon}{2}-2>\frac{
\beta -a}{2}>0.
\end{eqnarray*}
From Lemma \ref{lemma}, $\alpha$ cannot be irrational.

\end{proof}

\begin{corollary}
Let $(A^{\mathbb{Z}^{D}},F,\mu )$ be a cellular automaton. If the set of equicontinuity points contains a cylinder $[u]$ then either its eigenvalue is rational or its associated eigenfunction is null on $[u]$. 
\end{corollary}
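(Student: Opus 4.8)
The plan is to argue by contradiction, reusing the mechanism of Lemma~\ref{lemma} and Proposition~\ref{propn} but localised to the cylinder $[u]$. First I would record the two structural facts that make $[u]$ usable. Since $\mu$ is the uniform Bernoulli measure, every cylinder has positive measure, so $\mu([u])=|A|^{-|u|}>0$; in particular the inclusion $[u]\subseteq \xi$ forces $\mu(\xi)>0$. Moreover, because every point of $[u]$ is an equicontinuity point, its trace on $[-n,n]^{d}$ is eventually periodic (by the density-of-shift-periodic-points argument already used in the proof of Proposition~\ref{propn}), and hence $[u]\subseteq Q_{n}$ for every $n\in\mathbb{N}$.

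Now fix an eigenvalue $\alpha$ with eigenfunction $g$ and assume $\alpha$ is irrational; the goal is to show $g=0$ $\mu$-almost everywhere on $[u]$. Suppose not. Then there is a threshold $d_{0}>0$ for which $E:=E_{d_{0}}\cap[u]=\{x:|g(x)|>d_{0}\}\cap[u]$ has $\mu(E)=a>0$. The crucial point, and where the argument must depart from Proposition~\ref{propn}, is that I do \emph{not} require $\mu(Q_{n})$ to be close to $1$: since $E\subseteq[u]\subseteq Q_{n}=\bigcup_{k}Q_{n}(k)$ with $Q_{n}(k)$ increasing in $k$, I get $\mu(E\cap Q_{n}(k))\uparrow\mu(E)=a$, so for $k$ large $\mu(E\cap Q_{n}(k))>\frac{3a}{4}$. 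Choosing in addition $n$ large and $\eta_{0}<d_{0}$ so that the Lusin set satisfies $\mu(H_{n}(\eta_{0}))>1-\frac{a}{4}$, I set $S=E\cap Q_{n}(k)\cap F^{-k}(H_{n}(\eta_{0}))$. Using the invariance of $\mu$ under the surjective map $F$ together with the Bonferroni inequality,
\[
\mu(S)\ \ge\ \mu(E\cap Q_{n}(k))+\mu(H_{n}(\eta_{0}))-1\ >\ \tfrac{3a}{4}+\bigl(1-\tfrac{a}{4}\bigr)-1\ =\ \tfrac{a}{2}\ >\ 0 .
\]

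Finally I would reproduce the contradiction of Lemma~\ref{lemma} on this positive-measure set $S$. For $x\in Q_{n}(k)$ the trace stabilises to a common bounded period $p$, so $F^{k}(x)_{[-n,n]^{d}}=F^{k+Np}(x)_{[-n,n]^{d}}$ for all $N$; combined with $F^{k}(x)\in H_{n}(\eta_{0})$ this forces $|g(F^{k}(x))-g(F^{k+Np}(x))|<\eta_{0}$. On the other hand the eigenrelation gives $|g(F^{k}(x))-g(F^{k+Np}(x))|=|1-e^{2\pi i Np\alpha}|\,|g(x)|$, and $x\in E$ yields $|g(x)|>d_{0}>\eta_{0}$. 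As $\alpha$ is irrational the orbit $\{e^{2\pi i Np\alpha}\}_{N}$ is dense in the unit circle, so some $N_{0}$ makes $|1-e^{2\pi i N_{0}p\alpha}|>1$, whence the modulus exceeds $\eta_{0}$ — contradicting the Lusin estimate. Hence $g$ vanishes a.e.\ on $[u]$, which is exactly the stated dichotomy.

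The main obstacle is precisely the measure bookkeeping when $\mu([u])$ is small: one cannot simply invoke Proposition~\ref{propn}, whose estimate is built on $\mu(Q_{n})$ being near $1$. The fix is to intersect with $Q_{n}(k)$ (a property of $x$) rather than with $F^{-k}(Q_{n}(k))$, exploiting $E\subseteq Q_{n}$ so that almost all of the small mass of $E$ already lies in $Q_{n}(k)$ for $k$ large. A secondary technical point that I would need to pin down is the passage from membership in $Q_{n}(k)$ to a single bounded period valid from time $k$ onward, i.e.\ absorbing the transient (pre-period) into the choice of $k$, which is legitimate because increasing $k$ recovers full mass of $E$ in $Q_{n}(k)$.
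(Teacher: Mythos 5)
Your proof is correct (at the same level of rigor as the paper's own arguments), but it is not the paper's route: the paper gives no proof of this corollary at all, presenting it as an immediate application of Proposition~\ref{propn}, the only observation needed being that a cylinder has uniform Bernoulli measure $|A|^{-|u|}>0$, so the equicontinuity set $\xi \supseteq [u]$ has positive measure. Your localized rerun of Lemma~\ref{lemma} is genuinely different, and it is in fact the stronger and more defensible argument. Read literally, Proposition~\ref{propn} only yields, by contraposition, that an irrational eigenvalue forces $g$ to vanish at \emph{some} point of $\xi$, not a.e.\ on $[u]$ as the corollary asserts; moreover, the proposition's own bookkeeping requires choosing $d_{0}$ with $\mu(E_{d_{0}})=1-a$ and $a<\mu(\xi)$, i.e.\ $|g|>d_{0}$ on a set of measure exceeding $1-\mu(\xi)$, which the hypothesis that $\xi$ contains a (possibly very small) cylinder cannot supply. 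Your bookkeeping --- taking $E=E_{d_{0}}\cap[u]$, using $[u]\subseteq Q_{n}$ and $Q_{n}(k)\uparrow Q_{n}$ to place almost all of the mass of $E$ inside $Q_{n}(k)$, and pulling back only $H_{n}(\eta_{0})$ under $F^{-k}$ --- is exactly what lets the contradiction run from an arbitrarily small set of positive measure, and it delivers the dichotomy in the strong (a.e.) form; the same device would repair the proof of Proposition~\ref{propn} itself. Two caveats, both shared with the paper's own proofs: (i) as defined, $Q_{n}(k)$ bounds only the period and not the preperiod, so you do need the reindexed sets $\{x:\exists p\leq k,\ \forall i,\ F^{k+ip}(x)_{[-n,n]^{d}}=F^{k}(x)_{[-n,n]^{d}}\}$ --- you flag this, and the fix is sound since these sets also increase to $Q_{n}$; (ii) to invoke the definition of $H_{n}(\eta_{0})$ you also need $F^{k+Np}(x)\in H$ and the iterated eigenrelation $g(F^{j}(x))=e^{2\pi i j\alpha}g(x)$, so $H$ should be shrunk to the full-measure set $\bigcap_{j\geq 0}F^{-j}(H)$, which uses the surjectivity (hence $\mu$-invariance of $F$) that is the section's standing assumption. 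Finally, note there is an even shorter route to an even stronger conclusion: the set of equicontinuity points is shift-invariant and $\mu$ is ergodic for the shift action, so $[u]\subseteq\xi$ forces $\mu(\xi)=1$, hence $\xi\subseteq Q_{n}$ gives $\mu(Q_{n})=1$ for every $n$, and Lemma~\ref{lemma} then excludes measurable irrational eigenvalues outright, with no dichotomy needed.
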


\begin{proposition}
\ \ \ \ \ \newline
Let $(A^{\mathbb{Z}^{D}},F,\mu )$ be a surjective cellular automaton with radius $r.$\newline
If $F$ admit a fully blocking pattern $w$  then $F$ cannot have measurable irrational eigenvalues.
\end{proposition}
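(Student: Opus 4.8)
The plan is to reduce the statement to Lemma \ref{lemma} by proving that $\mu(Q_n)=1$ for every $n\in\mathbb{N}$; once this is in hand the conclusion is immediate. So the whole problem becomes a statement about the almost-sure eventual periodicity of the trace of $F$ on each central window $[-n,n]^d$, and the role of the fully blocking pattern $w$ is to supply the barriers that force this periodicity.

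First I would record that the blocked region of $w$ evolves in time in an eventually periodic way. Let $\overline{w}$ be the spatially periodic configuration obtained by tiling $\mathbb{Z}^d$ with copies of $w$. Being shift-periodic, $\overline{w}$ is eventually periodic under $F$, so its trace on the blocked region of $w$ is eventually periodic; by the fully-blocking property this trace coincides, for every $x\in[w]_{\overrightarrow{0}}$, with $F^{i}(x)$ restricted to that same region. Hence every occurrence of $w$ carries a ``wall'' whose content is determined by $w$ alone and is eventually periodic in $i$. This is the mechanism that will make any finite enclosed region evolve periodically.

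Next I would show that for $\mu$-almost every $x$ the window $[-n,n]^d$ is enclosed by a barrier assembled from translated occurrences of $w$: a closed shell of copies of $w$, of thickness at least the radius $r$, whose blocked regions separate the window from the exterior. Granting such a barrier, the window lies in a finite region whose evolution is closed, by the previous step the barrier cells evolve eventually periodically and independently of the outside, and the finite interior, driven by this eventually periodic boundary, is a finite-state deterministic system, so its trace is eventually periodic. Thus $x\in Q_n$, and since $n$ is arbitrary we get $\mu(Q_n)=1$, after which Lemma \ref{lemma} finishes the proof. Gamber's theorem (\cite{Emilie}), which gives almost equicontinuity, is the topological shadow of exactly this picture.

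The main obstacle is the almost-sure existence of the enclosing barrier in the third step. In dimension one it is immediate: $\mu([w])>0$ and the shift-invariance of $\mu$ force $w$ to occur infinitely often on both sides of the origin, and two occurrences already isolate any interval, so $\mu(Q_n)=1$ follows at once. In dimension $d\geq 2$ this is the crux, because a single blocking pattern must now be assembled into a \emph{closed} frame, and the naive argument over nested shells fails: the probability that a shell of radius $R$ is correctly tiled decays like $|A|^{-cR^{d-1}}$, which is summable, so the second Borel--Cantelli lemma does not apply and one cannot conclude that some shell is correctly tiled almost surely. To close this gap I would either invoke ergodicity of $F$ when it is available, since $Q_n$ is $F$-invariant and contains the positive-measure set of configurations carrying one fixed enclosing frame, hence has full measure, or else produce the barrier by a direct geometric construction from the fully blocking pattern together with its eventually periodic wall rather than by an independent random tiling. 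This is the step I expect to carry the real difficulty.
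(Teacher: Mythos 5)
You have correctly reproduced the paper's key construction: the closed shell of copies of $w$ around the window $[-n,n]^{d}$, whose blocked cells evolve eventually periodically and insulate the finite interior, is exactly the set $\tau(w,n)$ in the paper's proof, and your first two steps are sound. The genuine gap is the step you yourself flag as the crux, and neither of your proposed repairs closes it. Your reduction goes through Lemma \ref{lemma}, so it needs $\mu(Q_{n})=1$. Ergodicity is not available: it is not among the hypotheses and does not follow from them (the identity automaton is surjective, admits fully blocking patterns, and is not ergodic for the uniform Bernoulli measure), so an argument that invokes ergodicity ``when it is available'' does not prove the stated proposition. Nor can a cleverer geometric construction of the barrier rescue the full-measure claim: your own Borel--Cantelli computation shows that nested perfect shells occur with probability strictly less than $1$, and for irregular barriers assembled from overlapping occurrences of $w$ the question becomes a percolation problem whose answer is also negative in general --- the density of cells covered by an occurrence of $w$ is of order $|w|\,|A|^{-|w|}$, and when this is small there is, with positive probability, a path of uncovered cells from the window to infinity, so no enclosing barrier of any shape exists. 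Thus the enclosure mechanism can only ever give $\mu(Q_{n})>0$, and your proof stops exactly where the difficulty begins.

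The paper does not attempt full measure. It shows $\mu(\tau(w,n))>0$, concludes $\tau(w,n)\subset Q_{n}$, hence only $\mu(Q_{n})>0$, and then invokes Proposition \ref{propn} --- the positive-measure analogue of Lemma \ref{lemma} --- instead of the lemma itself. That is the missing idea in your proposal: replace the appeal to Lemma \ref{lemma} by an appeal to the positive-measure statement. Be aware that this route is not free of cost: Proposition \ref{propn} requires the eigenfunction not to vanish on the positive-measure set in question, and accordingly the paper's own proof ends with precisely that caveat (irrational eigenvalues are excluded ``if its associated eigenfunction is not null in $\bigcap_{n\in\mathbb{N}}Q_{n}$''), so what is actually established there is conditional in the same way. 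Your plan, had it worked, would have yielded the unconditional statement; but in dimension $d\geq 2$ it cannot be made to work by barriers alone.
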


\begin{proof}
Let $n$ be an integer.
Consider the set $\tau (w,n)$ of points from $A^{\mathbb{Z}^{d}}$ where we will fill out with the pattern $w$ the coordinates given by $\overrightarrow{i}
\in \mathbb{Z}^{d}$ such that
\begin{equation*}
\left\Vert \overrightarrow{i}\right\Vert =\left\Vert \overrightarrow{
(i_{1},...,i_{d})}\right\Vert =\max_{1\leq j\leq D}|i_{j}|=m\text{ where } n\leq m \leq n+r.
\end{equation*}
\newline
For any pattern $u$ such that $|u|=n \times n$, the cylinder defined over the pattern $u$ filled out with $w$ belong to $\tau (w,n)$. Thus $\mu (\tau (w,n))>0$ .
We show that any point from $\tau (w,n)$ has an eventually periodic trace at the positions $[-n-r,n+r]^{d}$.
Indeed, consider a point $y\in A^{\mathbb{Z}^{d}}$ filled outside of the coordinates $[-n-r,n+r]^{d}$ with
the fully blocking patterns $w$ then $y$ by construction is an equicontinuity point that belong to $\tau (w,n)$ and for $\epsilon=2^{-nr}$, there exist $\delta > 0$ such that: $\forall x \in \tau (w,n)$: $d(F^{i}(x),F^{i}(y))\leq\epsilon.$

Recall that the values for all iterates $(F^{i})_{i\in \mathbb{N}}$ of $y$ are determined just by the values of $y$ in the
positions $[-n,n]^{d}$ then the sequence $([-n,n]^{d})_{i\in \mathbb{N}} $is eventually periodic. Moreover, for a point $x$ from $\tau (w,n)$ there exist a preperiod $m$ and a period $p$ such that:
\begin{equation*}
\forall k\in \mathbb{N}:F^{m}(x)|_{[-n,n]^{d}}=F^{m+kp}(x)|_{[-n,n]^{d}}
\end{equation*}
which means that the sequence $(F^{k}(x)_{[-n,n]^{d}})_{k\in \mathbb{N}}$ is ultimately periodic for every $x\in \tau (w,n)$.\newline
Therefore, $\tau (w,n)\subset Q_{n}$ for all $n$ and $\mu (Q_{n})>0.$\newline
From the Proposition \ref{propn}, $F$ cannot have irrational measurable eigenvalues if its associated eigenfunction is not null in $\bigcap_{n\in \mathbb{N}} Q_{n}$.
\end{proof}

\clearpage
\section{Conclusion}

The main result of this paper is about the impossibility for a cellular automaton with fully blocking patterns points to have an irrational eigenvalue in any dimension.

This condition is stronger than the condition in dimension 1 where any cellular automaton with equicontinuous or almost equicontinuous points cannot have any irrational eigenvalues.

In dimension one it is still unknown whether closing cellular automata admits or not irrational eigenvalues.
Moothathu \cite{Moothatu} showed that in dimension 1 the transitivity is equivalent to weakly mixing but there is no result about the relation between mixing and weakly mixing properties.

Dynamical properties of cellular automata are less well known in higher dimensions. There are few results about mixing properties of cellular automata in such dimensions and fewer examples. It is still unknown if the result of Moothathu still holds in higher dimensions.

We know that transitive dynamical systems on the interval are mixing, is it possible to think that dimension matter here ? In other words are one dimensional transitive cellular automata mixing ? Is it possible that this is not true in higher dimension ?.

\bibliography{articles,book,texjourn,texnique,tubguide,tugboat,type,typeset,umthsmpl,unicode,uwthesis,ws-book-sample,ws-pro-sample,xampl}

\end{document}